\newtheorem{theorem}{Theorem}[section]
\newtheorem{remark}{Remark}[section]
\newtheorem{lemma}{Lemma}[section]
\theoremstyle{definition}
\newtheorem{example}{Example}[section]
\numberwithin{equation}{section}
\DeclareMathOperator{\im}{Im}
	\title{Existence of periodic solution of a non-autonomous allelopathic phytoplankton model with fear effect}
		\author[1]{Satyam Narayan Srivastava}
		\author[1]{Alexander Domoshnitsky}
		\author[2]{Seshadev Padhi}
		\author[3]{Rana D. Parshad}
		\affil[1]{Department of Mathematics, Ariel University, Ariel-40700, Israel, satyamsrivastava983@gmail.com, adom@ariel.ac.il}
		\affil[2]{Department of Mathematics, Birla Institute of Technology, Ranchi-835215, India, spadhi@bitmesra.ac.in}
		\affil[3]{Department of Mathematics, Iowa State University, Ames, IA 50011, USA, rparshad@iastate.edu}
	\date{}
\begin{document}
		\maketitle

		\begin{abstract} 
In this paper, we consider a non-autonomous allelopathic phytoplankton competition ODE model, incorporating the influence of fear effects observed in natural biological phenomena. Based on Mawhin’s coincidence degree theory some sufficient conditions for existence of periodic solutions are obtained. We validate our findings through an illustrative example and numerical simulations, showing that constant coefficients lead to steady-state dynamics, while periodic variations induce oscillatory behavior.
%
%
			\smallskip
			
			{\it Key Words and Phrases}:  allelopathy, phytoplankton, Competition, fear effect, existence of solution, periodicity, coincidence degree theory
			
		\end{abstract}
		
	
	\section{Introduction}
	Phytoplankton are the autotrophic components of the plankton community and a key part of ocean and freshwater ecosystems. Phytoplankton form the base of aquatic food webs, are crucial for ecosystem functions and services, and significantly benefit the biotechnology, carbon sequestration pharmaceutical, and nutraceutical sectors \cite{briggs2020major,pradhan2022phytoplankton, winder2012phytoplankton}. A distinct phenomenon observed among phytoplankton species is the production of secondary metabolites by one species that inhibit the growth or physiological functions of another phytoplankton species \cite{ccsp22}. 
	This behavior, known as allelopathy, occurs when phytoplankton engage in competitive interactions by releasing toxic compounds. Numerous studies have demonstrated that allelopathy plays a vital role in shaping the competitive dynamics among phytoplankton. For example, Maynard-Smith \cite{maynard1974models} incorporated an allelopathic term into the classical two-species Lotka–Volterra competition model to account for the harmful effects one species exerts on the other:
	\begin{equation*}
	\begin{cases}
		\frac{dN_1(t)}{dt} = N_1(t) \left( \alpha_1 - \beta_1 N_1(t) - v_1 N_2(t) - \gamma_1 N_1(t) N_2(t) \right), \\
		\frac{dN_2(t)}{dt} = N_2(t) \left( \alpha_2 - \beta_2 N_2(t) - v_2 N_1(t) - \gamma_2 N_1(t) N_2(t) \right),
	\end{cases}
	\end{equation*}
	where  $N_i(t)$ $(i = 1, 2)$ represents the density of the two competing phytoplankton species, \( \alpha_i \) is the daily cell proliferation rate, $\beta_i$ denotes the intraspecific competition rate of the $i$-th species, $v_i$ represents the interspecific competition rate, and  $\gamma_i$ is the toxicity coefficient exerted by the other species on species $i$. The initial conditions are $N_i(0) > 0$.

	\ \ \ \ \ Building on the work of Maynard-Smith, numerous researchers have examined scenarios where only one species releases toxins. Chen et al.\ \cite{insp1} proposed a discrete system to model toxin release by a single species:
\begin{equation*}
	\begin{cases}
			x_1(n+1) &= x_1(n) \exp \left[ r_1(n) - a_{11}(n)x_1(n) - a_{12}(n)x_2(n) - b_1(n)x_1(n)x_2(n) \right], \\
		x_2(n+1) &= x_2(n) \exp \left[ r_2(n) - a_{21}(n)x_1(n) - a_{22}(n)x_2(n) \right].
	\end{cases}
\end{equation*}	
The authors established the conditions for extinction and global stability of system. It was demonstrated that at low rates of toxin release, the extinction dynamics of system remain unaffected, indicating that the toxic species cannot drive the non-toxic species to extinction. Further investigations into allelopathic \cite{chen2013extinction,insp2,mandal2023toxicity}

\ \ \ \  In reality, a non-toxic species can go extinct even when exposed to only low concentrations of toxins. This raises the question of what other factors, aside from degradation caused by toxins, might influence the density of competing phytoplankton species, in the absence of additional external influences. Given that allelopathy is modeled on the classical Lotka–Volterra competition framework, we will explore the concept of competitive fear.

\ \ \ \ Wang et al.\ \cite{wang2016modelling} were the first to incorporate the fear effect into the classical two-species Lotka–Volterra predator–prey model. They defined the fear effect function as $f(k, y) = \frac{1}{1 + ky}$, which represents the prey's anti-predation response induced by fear of the predator. The study revealed that under conditions of Hopf bifurcation, an increase in the fear level could shift the direction of the Hopf bifurcation from supercritical to subcritical, provided the prey's birth rate also increases. Numerical simulations further indicated that anti-predator defenses of prey intensify as the predator's attack rate increases. Additional studies on the fear effect in predator–prey models can be found in \cite{biswas2021delay,kaur2021impact,lai2020stability,liu2022stability}.

\ \ \ \ \ The impact of fear on predator–prey systems has been widely explored; however, its role in competition systems has received significantly less attention. Nevertheless, compelling evidence suggests that fear can exist in purely competitive systems, even in the absence of predation effects or when such effects are negligible \cite{ccsp48}.

\ \ \ \ \ One well-known example of ``fear" as a factor in a predator-prey relationship is between wolves (Canis lupus) and elk (Cervus canadensis) in Yellowstone National Park, USA \cite{laundre2001wolves}. This relationship illustrates how the fear of predation can significantly impact prey behavior and even ecosystem structure, beyond just the direct impact of being hunted. After wolves were reintroduced to Yellowstone in 1995, elk populations had to adapt not only to the risk of predation but also to the heightened stress and vigilance required to avoid wolves. Studies have shown that elk alter their grazing patterns, avoiding open areas where wolves are more likely to spot them, and spend more time in dense forest cover. This behavioral shift reduced the browsing pressure on young aspen and willow trees, which allowed these plants to grow taller and even led to a resurgence of certain riparian ecosystems that depend on these trees for structural stability and habitat landscape of fear" effect, where prey animals modify their behavior due to the perceived risk of predation, exemplifies how the presence of predators can influence ecosystems by inducing fear in prey species, even if actual predation rates remain low.

\ \ \ \ The work of Srivastava et al.\ \cite{fearcm} is the first to consider fear in competitive systems, to the best of our knowledge. Inspired by the works \cite{insp1,insp4,kaur2021impact,insp5,fearcm}, Chen et.\ al \cite{apm2} investigate how the fear parameter affects competitive allelopathic planktonic systems by introducing a fear effect term, where the non-toxic species is ``fearful" of the toxic species. Authors in \cite{apm2} performed dynamical analysis on the model
	 \begin{align}
	 	\frac{d x_{1}}{d \tau} =& r_{1}x_{1} \left( 1- \frac{x_{1}}{k_{1}}\right)- \beta_{1}x_{1} x_{2},   \label{e1} \\
	 	\frac{d x_{2}}{d \tau} =&r_{2} x_{2} \left( \frac{1}{1+ w_{1} x_{1}} - \frac{x_{2}}{k_{2}}\right) - \beta_{2}x_{1}x_{2} - w_{2}x_{1} x_{2}^{2} \label{e2}
	 \end{align}
where $r_{i}$,  $k_{i}$, $\beta_{i}$ $(i=1,2)$, $w_{1}$, and $w_{2}$, respectively, denote the intrinsic growth rate, carrying capacity, the interspecific competition rate, the fear effect parameter, and the toxicity coefficient.  

\ \ \ To more accurately reflect the real-world influences of climate change and seasonal variations, many researchers have studied non-autonomous models, examining their dynamic behavior, including permanence, as well as the existence and stability of positive periodic solutions \cite{apm4,apm5,apm6,apm7,apm3,apm8}. In \cite{abbas2012almost}, authors obtained the sufficient conditions for permanence along with existence-uniqueness of an almost periodic solution of a non-autonomous two species competitive allelopathic phytoplankton model in presence of a discrete time delay. In \cite{zhao2020extinction}, a non-autonomous allelopathic phytoplankton model with nonlinear inter-inhibition terms and feedback controls is studied. However, no study has yet explored the non-autonomous allelopathic phytoplankton model with fear effect. A non-autonomous model can effectively incorporates these time-dependent effects. 

\ \ \ Motivated by the above mentioned applications and theoretical investigations, in this paper, we formulate a non-autonomous allelopathic phytoplankton model with fear effect and investigate the existence of its positive periodic solution. The model is constructed from \eqref{e1}-\eqref{e2} using non-autonomous parameters
	 \begin{align}
	\frac{d x_{1}}{d \tau} =& r_{1}(t)x_{1}(t) \left( 1- \frac{x_{1}(t)}{k_{1}}\right)- \beta_{1}(t)x_{1}(t) x_{2}(t),   \label{e3} \\
	\frac{d x_{2}}{d \tau} =&r_{2}(t) x_{2}(t) \left( \frac{1}{1+ w_{1}(t) x_{1}(t)} - \frac{x_{2}(t)}{k_{2}}\right) - \beta_{2}(t)x_{1}(t)x_{2}(t) - w_{2}x_{1}(t) x_{2}^{2} (t) \label{e4}
\end{align}
where $r_{1}(t)$, $r_{2}(t)$, $\beta_{1}(t)$, and $\beta_{2}(t)$ are all positive $T$-periodic functions.The ecological
interpretation of the parameters in model \eqref{e3}-\eqref{e4} remains the same as in model \eqref{e1}-\eqref{e2} where $x(t)$ and $y(t)$ represent the toxic and non-toxic species, respectively, and the
other parameters also retain their ecological meaning as described previously.

\ \ \ Many mathematical models are formulated using differential equations or systems of differential equations. The expanding diversity of differential equation systems has attracted numerous researchers to explore the dynamical behavior of solutions, with a particular focus on the existence and uniqueness of solutions. Several studies in the literature employ
Schauder’s fixed point theorem, Krasnosel’skii’s fixed point theorem, and Mawhin’s coincidence degree theory to investigate the existence of positive periodic solutions in a variety of biological models. In recent years, the coincidence degree method has become a powerful and effective tool for addressing the existence of periodic solutions in both differential and
difference equations. Recent studies employing coincidence degree theory for various biological models include \cite{bohner2006existence,apm4,apm5,apm6,li2018existence,apm7,apm3,apm8}. The present paper aims to find some suitable conditions for the existence of positive periodic solution for system \eqref{e3}-\eqref{e4}.

\ \ \ \ The remainder of this paper is organized as follows: Section 2 introduces the fundamental concepts of coincidence degree theory and some useful notations. In Section 3, we establish sufficient conditions for the existence of a positive periodic solution using the continuation theorem of coincidence degree theory and provide an illustrative example. Finally, in Section 4, we summarize our findings and suggest potential avenues for future research.

\section{Preliminaries}

Before presenting our results on the existence of periodic solution of system \eqref{e3}-\eqref{e4} we
briefly introduce the coincidence degree theory and some notations as follows
 
 \ \ \ Let $Z$ and $W$ be the real Banach spaces, and Let $L:dom (L)\subset Z \rightarrow W$ be Fredholm operator of index zero, If $P:Z\rightarrow Z$ and $Q:W\rightarrow W$ are two continuous projectors such that $\im (P)= Ker (L),$ $Ker(Q)=Im(L),$ $Z=Ker(L)\oplus Ker(P)$ and $W=Im(L)\oplus Im(Q),$ then the inverse operator of $L|_{dom(L)\cap Ker(P)}: dom(L)\cap Ker(P)\rightarrow Im(L)$ exists and is denoted by $K_{p}$ (generalized inverse operator of $L$). If $\Omega$ is an open bounded subset of $Z$ such that $dom(L)\cap \Omega \neq 0$, the mapping $N:Z\rightarrow W$ will be called L-compact on $\overline{\Omega}$, if $QN(\overline{\Omega})$ is bounded and $K_{p}(I-Q)N:\overline{\Omega} \rightarrow Z$ is compact. 	The abstract equation $Lx=Nx$ is shown to be solvable in view of \cite{mawhin}.

\begin{theorem}[\cite{mawhin}]\label{thm2.1}
	Let $L$ be a Fredholm operator of index zero and let $N$ be the L-compact on $\overline{\Omega}$.
	Assume the following conditions are satisfied:
	\begin{enumerate}
		\item[1)] $Lx\neq \lambda Nx$ for every $(x,\lambda)\in [(dom(L) \backslash Ker(L))\cap\partial\Omega] \times (0,1)$;
		\item[2)] $Nx \notin Im(L)$ for every $x\in Ker(L)\cap \partial \Omega$;
		\item[3)] $deg(QN|_{Ker(L)}, Ker(L)\cap \Omega,0)\neq 0$, where $Q:W\rightarrow W$ is a projector as above with $Im(L)=Ker(Q)$.  
	\end{enumerate}
	Then, the equation $Lx=Nx$ has at least one solution in $dom(L)\cap \overline{\Omega}$.
\end{theorem}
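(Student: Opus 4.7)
The plan is to follow Mawhin's original argument, which reformulates the abstract coincidence equation $Lx = Nx$ as a fixed-point equation in $Z$ and then applies the Leray--Schauder degree together with homotopy invariance.

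Because $L$ is Fredholm of index zero, $\dim \Ker L = \dim \im Q$, so there exists an isomorphism $J : \im Q \to \Ker L$. Introduce the homotopy
$$\Psi_\lambda x \;=\; Px + JQNx + \lambda\, K_{p}(I-Q)Nx, \qquad \lambda \in [0,1].$$
A direct decomposition via $Z = \Ker L \oplus \Ker P$, combined with the identities $LP = 0$, $QL = 0$, and $L K_{p} = I$ on $\im L$, shows that for $\lambda \in (0,1]$, an element $x$ is a fixed point of $\Psi_\lambda$ iff $x \in \dom(L)$ and $Lx = \lambda Nx$ (so $\lambda=1$ recovers the target equation); at $\lambda = 0$ the fixed-point set consists precisely of those $x \in \Ker L$ with $QNx = 0$, i.e.\ $Nx \in \im L$. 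Since $N$ is $L$-compact on $\overline{\Omega}$, the map $JQN + \lambda K_{p}(I-Q)N$ is compact for every $\lambda$, so each $\Psi_\lambda$ is a compact perturbation of the identity and the Leray--Schauder degree $\deg_{LS}(I - \Psi_\lambda, \Omega, 0)$ is well defined whenever $\Psi_\lambda$ has no fixed point on $\partial\Omega$.

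Hypotheses (1) and (2) are exactly the conditions needed to preclude such boundary fixed points throughout $\lambda \in [0,1]$: on $(\dom(L)\setminus\Ker L)\cap \partial\Omega$ with $\lambda\in(0,1)$ this is (1); on $\Ker L \cap \partial\Omega$ the relation $Lx = \lambda Nx$ forces $Nx = 0 \in \im L$, contradicting (2); and at $\lambda=0$ a boundary fixed point would mean $Nx \in \im L$ for some $x \in \Ker L\cap\partial\Omega$, again contradicting (2). (If the equation $Lx = Nx$ happens to have a boundary solution at $\lambda = 1$ we are already done.) By homotopy invariance,
$$\deg_{LS}(I - \Psi_1, \Omega, 0) \;=\; \deg_{LS}(I - \Psi_0, \Omega, 0),$$
and the Leray--Schauder reduction, applied to $\Psi_0 = P + JQN$ whose nonlinear part has range in the finite-dimensional subspace $\Ker L$, identifies the right-hand side (up to the sign of the isomorphism $J$) with the Brouwer degree $\deg(QN|_{\Ker L}, \Ker L \cap \Omega, 0)$. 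Hypothesis (3) makes this nonzero, so $\Psi_1$ has a fixed point in $\Omega$, yielding the sought solution of $Lx = Nx$ in $\dom(L) \cap \overline{\Omega}$.

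The main technical point is the bookkeeping that translates hypotheses (1)--(3) into the three standard ingredients of a degree argument, namely absence of boundary fixed points, homotopy invariance, and finite-dimensional reduction. No individual step is hard in isolation, but the precise interplay between the Fredholm decomposition, the projectors $P, Q$, the pseudoinverse $K_{p}$, and the auxiliary isomorphism $J$ is delicate and is what makes this continuation principle a genuine tool rather than a direct corollary of Leray--Schauder.
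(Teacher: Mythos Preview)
The paper does not prove Theorem~\ref{thm2.1}; it is quoted from \cite{mawhin} as a known tool and used as a black box in the subsequent sections. Your proposal correctly reproduces the standard Mawhin argument (compact homotopy $\Psi_\lambda = P + JQN + \lambda K_p(I-Q)N$, exclusion of boundary fixed points via (1)--(2), homotopy invariance of the Leray--Schauder degree, and reduction to the Brouwer degree on $\Ker L$ via (3)), so there is nothing to compare and nothing to correct.
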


For convenience, we shall introduce the notations
\begin{center}
	$\overline{f}=\frac{1}{T} \int_{0}^{T} f(t)dt$, \ \ \ $f^{L}=\min\limits_{t\in[0,1]}f(t)$, \ \ \ $f^{M}=\max\limits_{t\in[0,1]}f(t)$,
\end{center} 
where $f$ is a continuous $T$-Periodic function.

Set:
\begin{equation*}
	m_{\epsilon}= \frac{k_{2}r_{2}^{M}}{1+w_{1}k_{1}}+\epsilon, \ \ \ g_{\epsilon}=\frac{k_{1}\beta_{1}^{M}m_{0}}{r_{1}^{M}} + \epsilon, \ \ \ \  h_{\epsilon}=\frac{r_{2}^{L}k_{2}}{(1+w_{1}k_{1}) (r_{2}^{M} + w_{2}k_{2}k_{1})}.
\end{equation*}
Also, there exist positive numbers $L_{i} \ \ (i=1,2,\ldots,4)$ such that $L_{2}\leq z_{1}(t)\leq L_{1}$, $L_{4}\leq z_{2} (t) \leq L_{3}$, where $L_{i} \ (i=1,2\ldots,4)$ will be calculated as in the proof of following Theorem.

\section{Existence of the Periodic Solution}

In order to apply Mawhin coincidence degree theory, we need to make a change of variables.
Consider  $$z_{1}(t)=\ln x_{1}(t) \Rightarrow x_{1}(t)=e^{z_{1}(t)},$$
$$z_{2}(t)=\ln x_{2}(t) \Rightarrow x_{2}(t)=e^{z_{2}(t)},$$

then system \eqref{e3}-\eqref{e4} becomes
\begin{align}
	\frac{d z_{1}}{d \tau} =& r_{1}(t)\left( 1- \frac{e^{z_{1}(t)}}{k_{1}}\right)- \beta_{1}(t) e^{z_{2}(t)}, \label{e3.1} \\
	\frac{d x_{2}}{d \tau} =& r_{2}(t)  \left( \frac{1}{1+ w_{1} e^{z_{1}(t)}} - \frac{e^{z_{2}(t)}}{k_{2}}\right) - \beta_{2}(t)e^{z_{1}(t)} - w_{2}e^{z_{1}(t)} e^{z_{2}(t)}. \label{e3.2}
\end{align}

The spaces $Z$ and $W$ are both Banach spaces, defined as follows
\begin{equation*}
	Z = W = \{ z = (z_1, z_2) \in (\mathbb{R}, \mathbb{R}^2) \mid z(t+T) = z(t) \}
\end{equation*}
where each element $z = (z_1, z_2)$ satisfies the periodic condition $z(t+T) = z(t)$. These spaces are equipped with the norm $\| \cdot \|$ given by
\begin{equation*}
	\|z\| = \max_{t \in [0,T]} \sum_{i=1}^{2} |z_i|, \quad z = (z_1, z_2) \in Z \text{ or } W.
\end{equation*}
This norm ensures that both $Z$ and $W$ are complete, making them Banach spaces.

Define operators $L$, $P$, and $Q$ as follows, respectively:
\begin{equation*}
	L : \text{dom}(L) \cap Z \to W, \quad Lz = \left( \frac{dz_1}{dt}, \frac{dz_2}{dt} \right)
\end{equation*}
\begin{equation*}
	P\begin{pmatrix} z_1 \\ z_2 \end{pmatrix} = Q\begin{pmatrix} z_1 \\ z_2 \end{pmatrix} =
	\begin{pmatrix} \frac{1}{T} \int_0^T z_1(t) dt \\ \frac{1}{T} \int_0^T z_2(t) dt \end{pmatrix},
	\quad \begin{pmatrix} z_1 \\ z_2 \end{pmatrix} \in Z = W,
\end{equation*}
where
\begin{equation*}
	\text{dom}(L) = \{ z \in Z : z(t) \in C^1(\mathbb{R}, \mathbb{R}^2) \}.
\end{equation*}

We now define $N : Z \times [0,1] \to W$ as
\begin{equation*}
	N\begin{pmatrix} z_1 \\ z_2 \end{pmatrix} =
	\begin{pmatrix} \Gamma_1(z,t) \\ \Gamma_2(z,t) \end{pmatrix}.
\end{equation*}
where
\begin{equation*}
	\Gamma_1(z,t) = r_1(t) \left( 1 - \frac{e^{z_{1}(t)}}{k_{1}} \right) - \beta_1(t)e^{z_2(t)},
\end{equation*}
\begin{equation*}
	\Gamma_2(z,t) = r_2(t) \left( \frac{1}{1 + w_1 e^{z_1(t)}} - \frac{e^{z_{2}(t)}}{k_{2}}  \right) - \beta_2(t)e^{z_1(t)} - w_2 e^{z_1(t)} e^{z_2(t)}.
\end{equation*}
These functions are $T$-periodic. In fact,
\begin{equation*}
	\Gamma_1(z(t+T), t+T) = r_1(t) \left( 1 - \frac{e^{z_{1}(t)}}{k_{1}}\right) - \beta_1(t)e^{z_2(t)}.
\end{equation*}
Clearly, $\Gamma_2(z,t)$ is also a periodic function, as can be shown in a similar manner.

\  \ \ \ We introduced a change of variables, defined the relevant spaces, norms, and operators. To satisfy the conditions of Theorem \ref{thm2.1}, we first present two lemmas and then establish a theorem ensuring the existence of a periodic solution based on these lemmas.
\begin{lemma}\label{lem3.1}
	$L$ is a Fredholm operator of index zero and $N$ is $L$-compact.
\end{lemma}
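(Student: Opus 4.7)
The plan is to verify the two assertions of Lemma \ref{lem3.1} in the standard way that accompanies any application of Mawhin's continuation theorem to a first-order periodic ODE system: first work out $\Ker(L)$ and $\im(L)$ explicitly, then check the projector identities, then produce an explicit formula for the generalized inverse $K_P$, and finally invoke Arzelà--Ascoli on $K_P(I-Q)N$.

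For the Fredholm claim, I would start by observing that $\Ker(L)$ consists of all $T$-periodic $z=(z_1,z_2)$ with $z'=0$, hence $\Ker(L) \cong \mathbb{R}^2$. For the image, the fundamental theorem of calculus gives $\int_0^T z_i'(t)\,dt = z_i(T) - z_i(0) = 0$ for any $z \in \dom(L)$, so $\im(L) \subseteq \{w \in W : \int_0^T w(t)\,dt = 0\}$; conversely, given any such $w$, the function $z_i(t) := \int_0^t w_i(s)\,ds$ is $T$-periodic and $C^1$, proving the reverse inclusion. Thus $\im(L)$ is closed in $W$ with $\codim(\im(L)) = 2 = \dim(\Ker(L))$, so $L$ is Fredholm of index zero.

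Next I would check that the stated $P$ and $Q$ are continuous projectors with $\im(P) = \Ker(L)$ (the averaging operator reproduces constants), $\Ker(Q) = \im(L)$ (by the integral characterization just obtained), and that the direct sum decompositions $Z = \Ker(L) \oplus \Ker(P)$ and $W = \im(L) \oplus \im(Q)$ hold by writing $z = Pz + (z - Pz)$ and the analogous split for $W$. The generalized inverse is then given explicitly by
\begin{equation*}
K_P w(t) \;=\; \int_0^t w(s)\,ds \;-\; \frac{1}{T}\int_0^T \!\!\int_0^s w(u)\,du\,ds, \qquad w \in \im(L),
\end{equation*}
acting componentwise; one checks $L K_P = I$ on $\im(L)$ and $K_P L = I - P$ on $\dom(L) \cap \Ker(P)$.

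For the $L$-compactness of $N$ on an arbitrary bounded open $\Omega \subset Z$, the nonlinearities $\Gamma_1,\Gamma_2$ are continuous compositions of exponentials and periodic coefficients, hence map bounded subsets of $Z$ to bounded subsets of $W$; this immediately yields boundedness of $QN(\overline{\Omega})$. For compactness of $K_P(I-Q)N: \overline{\Omega} \to Z$, I would combine the uniform bound on $(I-Q)N(\overline{\Omega})$ with the integral formula for $K_P$: the image is uniformly bounded, and because $(K_P(I-Q)N z)'(t) = ((I-Q)Nz)(t)$ is uniformly bounded as $z$ ranges over $\overline{\Omega}$, the family is equicontinuous. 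Arzelà--Ascoli then gives relative compactness in $Z$, together with continuity of $K_P(I-Q)N$ which follows from continuity of $N$ and boundedness of the linear operator $K_P(I-Q)$. There is no real obstacle here --- the proof is routine bookkeeping --- and the only point requiring a little care is consistently distinguishing the periodic function space $Z=W$ from the smaller domain $\dom(L)$ when verifying the identities for $K_P$.
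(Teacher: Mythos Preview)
Your proposal is correct and follows essentially the same route as the paper: explicit identification of $\Ker(L)\cong\mathbb{R}^2$ and $\im(L)=\{w:\int_0^T w=0\}$, verification of the projector identities for $P$ and $Q$, the same explicit formula for $K_P$, and an Arzelà--Ascoli argument for the compactness of $K_P(I-Q)N$. If anything, your write-up is slightly more careful than the paper's --- you supply both inclusions for $\im(L)$ and spell out the equicontinuity step --- but the overall architecture is identical.
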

\begin{proof}
		It is easy to observe that $$Ker(L)= \{ z \in Z| z=c_{0}, \ c_{0} \in \mathbb{R}^{2} \},$$ and $$Im(L)=\{z\in W | \int_{0}^{T} z(t)dt=0 \}$$
	is closed in $W$. Furthermore, both $P$, $Q$ are continuous projections satisfying
	$$ Im(P)=Ker(L), \ Im(L)=Ker(Q)= Im(I-Q).$$
	
	For any $z \in W$, let $z_{1} = z - Qz$, we can obtain that
	$$\int_{0}^{T} z_{1}dp= \int_{0}^{T} z(p)dp - \int_{0}^{T} \frac{1}{T} \int_{0}^{T} z(t)dtdp=0,$$
	so $z_{1} \in Im(L)$. It follows that $W = Im(L) + Im(Q) = Im(L) +\mathbb{R}^{2}$. Since $Im(L) \cup \mathbb{R}^{2} = {0}$,
	we conclude that $W = Im(L) \oplus  \mathbb{R}^{2},$ which means $dim  \ Ker (L) = codim \  Im(L) = dim \ (\mathbb{R}^{2}) = 2$. Thus, $L$ is a Fredholm operator of index zero, which implies that $L$ has a unique generalized inverse operator.
	
	\ \ \ \ Next we show that $N$ is $L$-compact. Define the inverse of $L$ as $K_{P} : Im(L) \rightarrow \ Ker(P) \cap dom(L)$
	and is given by
	$$ K_{P}(z)= \int_{0}^{t} z(s)ds- \frac{1}{T} \int_{0}^{T} \int_{0}^{t}z(s)dsdt.$$
	
	Therefore, for any $z(t) \in Z$, we have
	\[QN\left(
	\begin{array}{c}
		z_{1} \\
		z_{2} \\
	\end{array}
	\right)
	= \left(
	\begin{array}{c}
		\frac{1}{T} \int_{0}^{T}	\Gamma_{1}(z,t)dt \\
		\frac{1}{T} \int_{0}^{T}	\Gamma_{2}(z,t)dt \\
	\end{array}
	\right),  \]
	and
	\begin{align*}
		K_{P} (I-Q)Nz &= \int_{0}^{t} Nz(s)ds - \frac{1}{T} \int_{0}^{T}\int_{0}^{t} Nz(s)dsdt - \frac{1}{T} \int_{0}^{t} \int_{0}^{T} QNz(s)dtds \\
		& \ \ \ \ \ \ \ \ \ \ \ \ \ \ \ \ \ \ \ \ \ \ \ \ \ \ \ \ \  \ \ \ \ \ \ \ \ \ \ \ \ \ \ + \frac{1}{T^{2}} \int_{0}^{T} \int_{0}^{t} \int_{0}^{T} QNz(s) dt ds dt  \\
		&= \int_{0}^{t} Nz(s)ds - \frac{1}{T} \int_{0}^{T}\int_{0}^{t} Nz(s)dsdt - \left(\frac{t}{T}-\frac{1}{2}\right) \int_{0}^{T} QNz(s)ds.
	\end{align*}
	Clearly, $QN$ and $K_{P}(I-Q)N$ are continuous. Due to $Z$ is Banach space, using Arzela-Ascoli theorem, we have that $N$ is $L$-compact on $\overline{U}$ for any open bounded set $U \subset Z.$ 
\end{proof}

\begin{lemma}\label{lem3.2}
	For $\lambda \in (0,1)$, we examine the following family of systems
	\begin{equation}\label{e3.3}
		\begin{cases}
			\frac{dz_1}{dt} = \lambda \left[ r_1(t) \left( 1 - \frac{e^{z_1(t)}}{k_1} \right) - \beta_1(t)e^{z_2(t)} \right], \\
			\frac{dz_2}{dt} = \lambda \left[ r_2(t) \left( \frac{1}{1+w_1e^{z_1(t)}} - \frac{e^{z_2(t)}}{k_2} \right) - \beta_2(t)e^{z_1(t)} - w_2e^{z_1(t)}e^{z_2(t)} \right].
		\end{cases}
	\end{equation}
	
	If the following conditions hold:
	\begin{itemize}
\item[(A1)] $k_2 r_2^M < 1 + w_1 k_1$, 
\item[(A1)]$\beta_1^M m_0 < r_1^L$, 
\item[(A1)] $ r_2^L k_2 < (1 + w_1 k_1)(r_2^M + w_2 k_2 k_1)$,
	\end{itemize}
	
	then any periodic solution $(x_1, x_2)$ satisfies the existence of positive constants $L_i$ (for $i = 1,2,\dots,4$) such that:
	\begin{equation*}
		L_2 \leq x_1(t) \leq L_1, \quad L_4 \leq x_2(t) \leq L_3,
	\end{equation*}
	where the values of $L_i$ (for $i = 1,2,\dots,4$) will be computed in the proof.
\end{lemma}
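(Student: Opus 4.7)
The plan is to establish, for an arbitrary $T$-periodic solution $z=(z_1,z_2)$ of \eqref{e3.3}, explicit $\lambda$-independent bounds on $x_i(t):=e^{z_i(t)}$. The strategy is the standard Mawhin a priori estimate: combine the integral identities that come from integrating each equation over a period with the pointwise equations satisfied at the extrema of $z_1,z_2$, and then use the fact that max/min dominate everywhere to obtain uniform bounds. First I would integrate both equations of \eqref{e3.3} from $0$ to $T$. Using $T$-periodicity the left-hand sides vanish and the overall factor $\lambda$ cancels, which yields
\begin{align*}
\int_0^T r_1(t)\,dt &= \frac{1}{k_1}\int_0^T r_1(t)e^{z_1(t)}\,dt + \int_0^T \beta_1(t)e^{z_2(t)}\,dt,\\
\int_0^T \frac{r_2(t)}{1+w_1 e^{z_1(t)}}\,dt &= \frac{1}{k_2}\int_0^T r_2(t)e^{z_2(t)}\,dt + \int_0^T \beta_2(t)e^{z_1(t)}\,dt + w_2 \int_0^T e^{z_1(t)+z_2(t)}\,dt.
\end{align*}
These identities are the backbone of the argument and already show that the bounds must be independent of $\lambda$.

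Since each $z_i$ is continuous and $T$-periodic, it attains its maximum at some $\xi_i\in[0,T]$ and minimum at some $\eta_i\in[0,T]$, where $z_i'(\xi_i)=z_i'(\eta_i)=0$. Evaluating the first equation of \eqref{e3.3} at $\xi_1$ gives $r_1(\xi_1)(1-e^{z_1(\xi_1)}/k_1)=\beta_1(\xi_1)e^{z_2(\xi_1)}\geq 0$, forcing $e^{z_1(\xi_1)}\leq k_1$ and hence $x_1(t)\leq k_1=:L_1$ globally. An analogous evaluation of the second equation at $\xi_2$ produces $e^{z_2(\xi_2)}\leq k_2/(1+w_1 e^{z_1(\xi_2)})$, and hypothesis (A1) is what turns this into the nontrivial uniform upper bound $L_3$ corresponding to $m_\epsilon$ (with a buffer $\epsilon>0$).

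With the global upper bounds $L_1$ and $L_3$ in hand, the lower bounds are extracted symmetrically. Evaluating the second equation at $\eta_2$ and substituting $x_1\leq k_1$ into the right-hand side leads to an inequality that, when solved for $e^{z_2(\eta_2)}$, produces the lower estimate $L_4$ matching $h_\epsilon$; hypothesis (A3) is precisely what makes $h_0$ positive. Evaluating the first equation at $\eta_1$ and using the just-obtained bound on $e^{z_2}$ then gives a lower bound $L_2$ for $x_1(\eta_1)$, whose positivity is ensured exactly by hypothesis (A2) (which controls how much the $\beta_1 e^{z_2}$ term can push $x_1$ below its carrying capacity). Since $\eta_i,\xi_i$ are minimum and maximum points, the bounds extend from a single instant to all $t\in[0,T]$, delivering $L_2\leq x_1(t)\leq L_1$ and $L_4\leq x_2(t)\leq L_3$.

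The main obstacle is the nonlinear coupling between the two equations — specifically the fear factor $1/(1+w_1 e^{z_1})$ and the toxicity term $w_2 e^{z_1+z_2}$ — which prevents a one-shot extremum argument and forces the estimates to be derived in a specific order: first the upper bound for $x_1$, then the upper bound for $x_2$, then the lower bound for $x_2$, and finally the lower bound for $x_1$. Care is also needed to verify that each $L_i$ is genuinely $\lambda$-independent, which follows automatically because $\lambda$ cancels from the integrated identities and from the extremum equations. The small $\epsilon>0$ appearing in $m_\epsilon,g_\epsilon,h_\epsilon$ will be used when this lemma is invoked in Theorem~\ref{thm2.1}, to make the bounding set $\Omega$ strictly open.
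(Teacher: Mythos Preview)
Your proposal is essentially the paper's approach: evaluate the right-hand sides at the extrema of $z_1$ and $z_2$ and chain the resulting inequalities in the order $L_1\to L_3\to\{L_2,L_4\}$, exactly as the paper does (the paper derives $L_2$ before $L_4$, you do the reverse, but each lower bound depends only on the already-obtained upper bounds, so either order works). One remark: the integral identities you write down and call ``the backbone of the argument'' are in fact never used---neither in the paper nor in your own subsequent steps---since the extremum equations (where $z_i'=0$ kills the $\lambda$) already give everything; also, the ``just-obtained bound on $e^{z_2}$'' you feed into the $L_2$ estimate must be the \emph{upper} bound $L_3$ (i.e.\ $m_0$), not the lower bound $L_4$ you derived immediately before.
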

\begin{proof}
	We assume that $z \in (z_{1},z_{2})^{T} \in Z$ is a $T$-periodic solution of system \eqref{e3.1}-\eqref{e3.2} for any fixed $\lambda \in (0,1).$  Since $(z_{1},z_{2}) \in Z,$ there exist $\eta_{i}, \xi_{i} \in [0,T]$ such that
$$ z_{i}(\eta_{i}) = \max\limits_{t\in[0,T]} z_{i}(t), \ \ \ \ z_{i}(\xi_{i}) = \min\limits_{t\in[0,T]} z_{i}(t), \ \ \ i=1,2. $$

Through simple analysis, we have,
\begin{equation*}
	\dot{z_{1}}(\eta_{1})=\dot{z_{1}}(\xi_{1})=0, \ \ \ \ \ \dot{z_{2}}(\eta_{2})=\dot{z_{2}}(\xi_{2})=0.
\end{equation*}

Aapplying the previous result to \eqref{e3.3}, we obtain
\begin{equation}\label{eq3.5}
	r_{1}(\eta_{1})\left( 1- \frac{e^{z_{1}(\eta_{1})}}{k_{1}}\right)- \beta_{1}(\eta_{1}) e^{z_{2}(\eta_{1})} =0,
\end{equation}
\begin{equation}\label{eq3.6}
	r_{2}(\eta_{2})  \left( \frac{1}{1+ w_{1} e^{z_{1}(\eta_{2})}} - \frac{e^{z_{2}(\eta_{2})}}{k_{2}}\right) - \beta_{2}(t)e^{z_{1}(\eta_{2})} - w_{2}e^{z_{1}(\eta_{2})} e^{z_{2}(\eta_{2})} =0,
\end{equation}
and
\begin{equation}\label{eq3.7}
	r_{1}(\xi_{1})\left( 1- \frac{e^{z_{1}(\xi_{1})}}{k_{1}}\right)- \beta_{1}(t) e^{z_{2}(\xi_{1})}=0,
\end{equation}
\begin{equation}\label{eq3.8}
	r_{2}(\xi_{2})  \left( \frac{1}{1+ w_{1} e^{z_{1}(\xi_{2})}} - \frac{e^{z_{2}(\xi_{2})}}{k_{2}}\right) - \beta_{2}(\xi_{2})e^{z_{1}(\xi_{2})} - w_{2}e^{z_{1}(\xi_{2})} e^{z_{2}(\xi_{2})} =0.
\end{equation}

From \eqref{eq3.5}, we obtain
\begin{equation*}
	r(\eta_{1})-\frac{r(\eta_{1})e^{z_{1}(\eta_{1})}}{k_{1}} >0,
\end{equation*}
which implies that
\begin{equation}\label{eq3.10}
	z_{1}(\eta_{1}) < \ln (k_{1}) = L_{1}.
\end{equation}

Considering \eqref{eq3.6} and \eqref{eq3.10}, we get
\begin{align*}
	\frac{r_{2}(\eta_{2})}{1+w_{1}e^{z_{1}(\eta_{2})}}-\frac{e^{z_{2}(\eta_{2})}}{k_{2}} > 0
\end{align*}
So, we can obtain 
\[ 		\frac{e^{z_{2}(\eta_{2})}}{k_{2}} < \frac{r_{2}(\eta_{2})}{1+w_{1} e^{z_{1}(\eta_{2})}} \]
or
\[ e^{z_{2}(\eta_{2})} < \frac{k_{2} r_{2}^{M}}{1+ w_{1} k_{1}} \]
which gives
\begin{align}\label{L3}
	z_{2}(\eta_{2})&<  \ln \left( \frac{k_{2}r_{2}^{M}}{1+w_{1}k_{1}}\right) = \ln m_{0} =L_{3}.	 
\end{align}

From \eqref{eq3.7} and \eqref{L3}, we can obtain
\begin{equation*}
	r_{1}(\xi_{1}) \left( 1- \frac{e^{z_{1}(\xi_{1})}}{k_{1}}\right) = \beta_{1}(\xi_{1}) e^{z_{2}(\xi_{1})}
\end{equation*}
then, \[ 1- \frac{e^{z_{1}(\xi_{1})}}{k_{1}} = \frac{\beta_{1}(\xi_{1}) e^{z_{2}(\xi_{1})}}{r_{1}(\xi_{1})}\]
or, \[ \frac{e^{z_{1}(\xi_{1})}}{k_{1}} >1- \frac{\beta_{1}^{M} m_{0}}{r_{1}^{L}}\]
which implies that
\begin{align}\label{L2}
	z_{1}(\xi_{1}) &> \ln \left( k_{1} \left(1-\frac{\beta_{1}^{M} m_{0}}{ r_{1}^{L}}\right)\right) = \ln (g_{0}) = L_{2}.
\end{align}		
In view of \eqref{eq3.8} and \eqref{L2}, we have
\begin{align*}
	r_{2}(\xi_{2})  \left( \frac{1}{1+ w_{1} e^{z_{1}(\xi_{2})}} - \frac{e^{z_{2}(\xi_{2})}}{k_{2}}\right) = \beta_{2}(\xi_{2})e^{z_{1}(\xi_{2})} + w_{2}e^{z_{1}(\xi_{2})} e^{z_{2}(\xi_{2})}
\end{align*}
or,
\[\frac{r_{2}(\xi_{2}) e^{z_{2}\xi_{2}}}{k_{2}} + w_{2} e^{z_{1}(\xi_{2}) e^{z_{2}(\xi_{2})}} = -\beta_{2}(\xi_{2}) e^{z_{1}(\xi_{2})} + \frac{r_{2}(\xi_{2})}{1+ w_{1}e^{z_{1}(\xi_{2})}}\]
Thus,
\[ 	e^{z_{2}(\xi_{2})} \left(\frac{r_{2}(\xi_{2})+w_{2}k_{2} e^{z_{1}(\xi_{2})}}{k_{2}} > \frac{r_{2}(\xi_{2})}{1+w_{1}e^{z_{1}(\xi_{1})}}\right),\]
or
\[ 	e^{z_{2}(\xi_{2})} > \frac{r_{2}^{L} k_{2}}{(1+w_{1}k_{1})(r_{2}^{M}+w_{2}k_{2}k_{1})} \]
that is	
\begin{align}\label{L4}
	z_{2}(\xi_{2}) &> \ln \left( \frac{r_{2}^{L} k_{2}}{(1+w_{1}k_{1})(r_{2}^{M}+w_{2}k_{2}k_{1})}  \right)  = \ln (h_{0}) = L_{4}.
\end{align}

Form \eqref{eq3.10}, \eqref{L3}, \eqref{L2}, \eqref{L4}, we get
$$ |z_{1}(t)| < max \{ |L_{1}|,|L_{2}|\} = \Lambda_{1},$$
$$ |z_{2}(t)| < max \{ |L_{3}|,|L_{4}|\} = \Lambda_{2}.$$
where $\Lambda_{1}, \Lambda_{2} $ is independent of $\lambda$.	Denote $\Lambda=\Lambda_	{1}+\Lambda_{2}+\Lambda_{3}$ where $\Lambda_{3}$ is taken sufficiently large such that each solution $ (z_{1}^{*},z_{2}^{*})$ of system
\begin{equation}\label{e12}
	\overline{r_{1}}- \frac{\overline{r_{1} e^{z_{1}(t)}}}{k1} - \overline{\beta_{1}} e^{z_{2}(t)} =0,
\end{equation}
\begin{equation}\label{e13}
	\frac{\overline{r_{2}}}{1+ w_{1} e^{z_{1}(t)}} -\frac{ \overline{r_{2}} e^{z_{2}(t)}  }{k_{2}} - \overline{\beta_{2}} e^{z_{1}(t)} - w_{2}e^{z_{1}(t)} e^{z_{2}(t)}=0,
\end{equation}
satisfies $|z_{1}^{*}|+|z_{2}^{*}|<\Lambda$. Now we consider $\Omega=\{(z_{1},z_{2})^{T} \in Z: \| (z_{1},z_{2})\| < \Lambda \}$ then it is clear that $\Omega$ satisfies the first condition of Theorem \ref{thm2.1}. 

\ \ \ We prove that $QN(z_{1},z_{2})^{T}\neq (0,0)^{T}$ for each $(z_{1},z_{2}) \in \partial\Omega \cap Ker(L)$. When $(z_{1},z_{2})^{T} \in \partial \Omega \cap Ker(L)=\partial \Omega \cap \mathbb{R}^{2}, \ (z_{1},z_{2})^{T}$ is a constant vector in $\mathbb{R}^{2}$ and $|z_{1}|+|z_{2}|=\Lambda$. If the system \eqref{e12}-\eqref{e13} has a solution, then
\[QN\left(
\begin{array}{c}
	z_{1} \\
	z_{2} \\
\end{array}
\right)
= \left(
\begin{array}{c}
	\overline{r_{1}}- \frac{\overline{r_{1} e^{z_{1}(t)}}}{k1} - \overline{\beta_{1}} e^{z_{2}(t)} \\
	\frac{\overline{r_{2}}}{1+ w_{1} e^{z_{1}(t)}} -\frac{ \overline{r_{2}} e^{z_{2}(t)}  }{k_{2}} - \overline{\beta_{2}} e^{z_{1}(t)} - w_{2}e^{z_{1}(t)} e^{z_{2}(t)}
\end{array}
\right)
\neq\left(
\begin{array}{c}
	0 \\
	0\\
\end{array}
\right). \]
Since, \eqref{e12}-\eqref{e13} does not have solution then, it is evident that $QN(z_{1},z_{2})^{T}\neq 0$, thus the second condition of the Theorem \ref{thm2.1} is satisfied.
\end{proof}

\ \ \ \ Having provided the necessary lemmas, we now present the existence of a positive periodic
solution for the system \eqref{e3}-\eqref{e4}.
\begin{theorem}\label{thm3.1}
	If the conditions (A1), (A2), and (A3) hold, then, system \eqref{e3}-\eqref{e4} has at
	least one positive T-periodic solution.
	\begin{proof}
	It follows from Lemma \ref{lem3.1} that $L$ is a Fredholm operator of index zero and $N$ is $L$-compact. By Lemma \ref{lem3.2}, we get that the first two conditions of Theorem \ref{thm2.1} are satisfied. Finally, we prove that the last condition of Theorem \ref{thm2.1} is satisfied, to do so, we define the following mapping 
		$ \Psi_{\mu} : dom (L) \times [0,1] \rightarrow Z$
		\[\Psi(z_{1},z_{2},\mu)
		= \left(
		\begin{array}{c}
			\overline{r_{1}}- \frac{\overline{r_{1} e^{z_{1}(t)}}}{k1} - \overline{\beta_{1}} e^{z_{2}(t)} \\
		 -\frac{ \overline{r_{2}} e^{z_{2}(t)}  }{k_{2}} - \overline{\beta_{2}} e^{z_{1}(t)} - w_{2}e^{z_{1}(t)} e^{z_{2}(t)}
		\end{array}
		\right)
		+
		\mu \left(
		\begin{array}{c} 0 \\
			\frac{\overline{r_{2}}}{1+ w_{1} e^{z_{1}(t)}} - w_{2}e^{z_{1}(t)} e^{z_{2}(t)}
		\end{array}
		\right),
		\]	
		where $\mu\in [0,1]$ is a parameter. When $(z_{1},z_{2})^{T} \in \partial\Omega \cap Ker(L)= \partial \Omega \cap R^{2}$, $(z_{1},z_{2})^{T}$ is a constant vector in $R^{2}$ with $\|(z_{1},z_{2})^{T}\|=W$. We will show that $(z_{1},z_{2})^{T}\in \partial\Omega \cap Ker(L)$, $\Psi((z_{1},z_{2})^{T},\mu)\neq 0$. The following algebraic equation
		\[\Psi(z_{1},z_{2},0)=0\]
		a unique solution
		$z_{1}^{*}=\ln \left(\frac{k_{1}}{\overline{r_{1}}} \left(\overline{r_{1} } - \frac{k_{1}k_{2}\overline{r_{1} } \overline{\beta_{1}} \overline{\beta_{2}}}{\overline{\beta_{1}} \overline{\beta_{2}}k_{1}k_{2}-\overline{r_{1}\overline{r_{2}}}}\right)\right)$ and $z_{2}^{*}=\ln \left(\frac{k_{1}k_{2}\overline{r_{1}\overline{\beta_{2}}}}{\overline{\beta_{1}} \overline{\beta_{2}} k_{1}k_{2} - \overline{r_{1}} \overline{r_{2}}} \right)$. Define the homomorphism $J: Im(Q) \to Ker(L)$, $Jz\equiv z$, a direct calculation shows that 
		\begin{align*}
			&deg( JQN(z_{1},z_{2})^{T}, \Omega \cap Ker(L), (0,0)^{T})	\\
			&\ \ \ =  deg( QN(z_{1},z_{2})^{T}, \Omega \cap Ker(L), (0,0)^{T})	 \\
			& \ \ \ = deg( \Psi(z_{1},z_{2},1)^{T}, \Omega \cap Ker(L), (0,0)^{T}) \\
			&\ \ \ = deg( \Psi(z_{1},z_{2},0)^{T}, \Omega \cap Ker(L), (0,0)^{T}) \neq 0 .
		\end{align*}	
		Therefore, we have verified all the requirements of coincidence degree theorem \ref{thm2.1} and the system \eqref{e3}-\eqref{e4} has at least one positive $T$-periodic solution.
		
		\ \ \  This completes the proof.	
	\end{proof}
\end{theorem}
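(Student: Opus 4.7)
The plan is to apply Mawhin's continuation theorem (Theorem \ref{thm2.1}) to the operator equation $Lz=Nz$ on a suitable bounded open set $\Omega \subset Z$. Lemma \ref{lem3.1} already establishes that $L$ is Fredholm of index zero and $N$ is $L$-compact, so the structural hypotheses are in place; what remains is to choose $\Omega$ and verify the three numbered conditions of Theorem \ref{thm2.1}.

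For conditions (1) and (2), I would take $\Omega = \{(z_1,z_2)^T \in Z : \|(z_1,z_2)\| < \Lambda\}$ with $\Lambda = \Lambda_1 + \Lambda_2 + \Lambda_3$, where $\Lambda_1,\Lambda_2$ are the $\lambda$-independent a priori bounds produced by Lemma \ref{lem3.2} under (A1)--(A3), and $\Lambda_3$ is an auxiliary slack chosen so that $\Omega$ also contains every constant root of the averaged algebraic system \eqref{e12}--\eqref{e13}. Lemma \ref{lem3.2} then delivers condition (1) directly. For condition (2), if $(z_1,z_2)^T \in \partial\Omega \cap \Ker(L)$, it is a constant vector of norm $\Lambda$; by the choice of $\Lambda_3$ it cannot solve \eqref{e12}--\eqref{e13}, and since $QN$ evaluated at a constant is precisely the left-hand side of that averaged system, $QN(z_1,z_2)^T \neq 0$.

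The main obstacle is the degree condition (3). My plan is a homotopy argument: introduce a parameter $\mu \in [0,1]$ and define a continuous family $\Psi(\cdot,\mu) : \Ker(L) \to \mathbb{R}^2$ that interpolates between $QN|_{\Ker(L)}$ at $\mu = 1$ and a simpler algebraic map $\Psi(\cdot,0)$ whose zero set and Jacobian I can compute in closed form. The delicate steps are to arrange that (i) the $\mu = 0$ system admits a unique root at which the Jacobian determinant has definite sign, and (ii) $\Psi(z_1,z_2,\mu) \neq 0$ for all $(z_1,z_2) \in \partial\Omega \cap \Ker(L)$ and all $\mu \in [0,1]$. Requirement (ii) is exactly what forces $\Lambda_3$ to be taken large enough to bound the zero sets of every member of the homotopy, not merely those of \eqref{e12}--\eqref{e13}. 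A convenient choice is to peel off the fear-type term $\overline{r_2}/(1+w_1 e^{z_1})$ from the second component at $\mu = 0$ and reinsert it linearly in $\mu$, which leaves a system in $e^{z_1}, e^{z_2}$ that is effectively polynomial and explicitly solvable.

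With the homotopy in place, homotopy invariance of the Brouwer degree gives
\[
\Deg\bigl(JQN|_{\Ker(L)}, \Omega \cap \Ker(L), 0\bigr) = \Deg\bigl(\Psi(\cdot,0), \Omega \cap \Ker(L), 0\bigr),
\]
and the right-hand side reduces to the sign of the Jacobian determinant at the unique explicit root, which is nonzero by the positivity of the averaged coefficients $\overline{r_i}, \overline{\beta_i}$. All three hypotheses of Theorem \ref{thm2.1} are thereby verified, so there exists a $T$-periodic $z = (z_1,z_2)$ solving \eqref{e3.1}--\eqref{e3.2}; exponentiating via $x_i(t) = e^{z_i(t)}$ returns to the original variables and yields the desired positive $T$-periodic solution of \eqref{e3}--\eqref{e4}.
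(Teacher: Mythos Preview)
Your proposal is correct and follows essentially the same route as the paper: invoke Lemmas \ref{lem3.1} and \ref{lem3.2} for the structural hypotheses and conditions (1)--(2), take $\Omega$ as the ball of radius $\Lambda=\Lambda_1+\Lambda_2+\Lambda_3$, and verify the degree condition (3) via a homotopy $\Psi(\cdot,\mu)$ that strips the fear term $\overline{r_2}/(1+w_1 e^{z_1})$ from the second component at $\mu=0$, leaving an explicitly solvable algebraic system whose unique root gives a nonzero Brouwer degree. Your write-up is in fact more careful than the paper's on one point---you note that $\Lambda_3$ must accommodate the zero sets of the entire homotopy family, not just the endpoint $\mu=1$---which the paper glosses over.
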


\ \ \ We now present an illustrative example to demonstrate the applicability of our result.

\begin{example}\label{ex1}
Consider the system \eqref{e3}-\eqref{e4} with the following selected parameters:
	\[r_1(t) = 0.03+0.5 \sin t, \  r_2(t) = 0.0001+0.9 \  \sin t,
	\beta_1(t) = 0.0004+0.5 \  \sin t,  \beta_2(t) = 0.006+0.2  \ \sin t, \]
\[	k_1 = 8, \ k_2 = 6, \  w_1 = 20, \ w_2 = 2.\]
By substituting these values, system \eqref{e3}-\eqref{e4} transforms into the following form
\begin{equation}\label{e3.14}
	\begin{cases}
		\frac{dx_1}{dt} = (0.03 + 0.5 \sin t)x_1 \left( 1 - \frac{x_1}{8} \right) - (0.0004 + 0.5 \sin t)x_1 x_2, \\
		\frac{dx_2}{dt} = (0.0001 + 0.9 \sin t)x_2 \left( \frac{1}{1+20x_1} - \frac{x_2}{6} \right) - (0.006 + 0.2 \sin t)x_1 x_2 - 2x_1 x_2^2.
	\end{cases}
\end{equation}

To verify the conditions of Theorem \ref{thm3.1}, we compute
\[
	r_1^L  = 0.03, \  r_2^L = 0.0001, \  r_2^M = 0.9001, \  \beta_1^M = 0.5004.
\]

Next, we evaluate the key expression
\begin{equation*}
	m_0 = \frac{k_2 r_2^M}{1 + w_1 k_2} = \frac{6 \times 0.9001}{1 + 20 \times 6} = 0.0446.
\end{equation*}

Consequently, we obtain:
\[	k_2 r_2^M = 5.4006 < 1 + w_1 k_1 = 161, \]
\[	\beta_1^M m_0 = 0.0223 < r_1^L = 0.03, \]
and
\[	r_2^L k_2 = 0.0006 < (1 + w_1 k_1)(r_2^M + w_2 k_2 k_1) = 15600.9161.\]

It is evident that assumptions (A1), (A2), and (A3) are satisfied. Hence, according to Theorem \ref{thm3.1}, system \eqref{e3.14} has at least one positive $T$-periodic solution. The Figure \ref{f1} below shows the validity of our results.
	\begin{figure}[H]
	\begin{center}
		\includegraphics[width=0.9\textwidth]{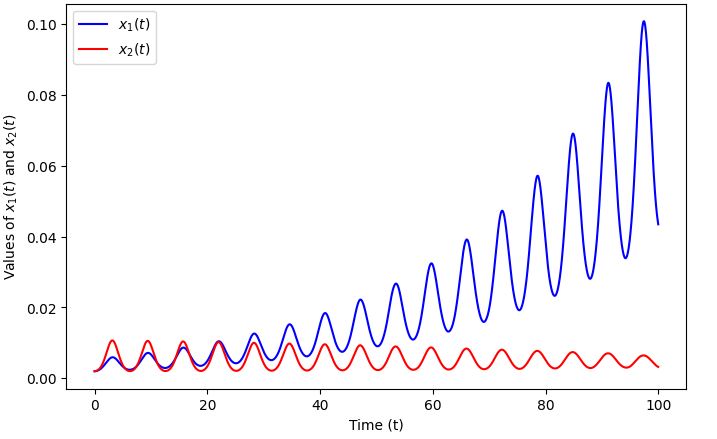} 
	\end{center}\caption{\label{f1}The periodic solution for system \eqref{e3.14} with initial condition $x_1(0) = 0.002$ and
	$x_2(0) = 0.002$. }	
\end{figure}
\end{example}
\begin{remark}
	\textnormal{It is important to highlight that in system \eqref{e3.14}, the presence of timedependent terms $\sin t$ introduces periodic fluctuations in the system’s behavior. However, if we remove the $\sin t$ terms, thereby making the coefficients purely constant as follows
		\[ r_1 = 0.03, \ r_2 = 0.0001, \  \beta_{1} = 0.0004, \  \beta_{2} = 0.006.\]
	In this case, the system exhibits steady-state dynamics rather than periodic behavior. This is clearly illustrated in Figure \ref{f2}, where the solutions stabilize over time instead of oscillating. This comparison highlights the crucial role of time-dependent fluctuations in generating periodic solutions, reinforcing the significance of our theoretical results. The presence of periodic coefficients directly influences the system’s behavior, leading to the emergence of periodic solutions, as demonstrated in Figure \ref{f1}. Thus, our findings validate the theoretical conditions established in Theorem \ref{thm3.1} and provide further insight into the impact of temporal variations on the system’s dynamics.}
\end{remark}
	\begin{figure}[H]
	\begin{center}
		\includegraphics[width=0.9\textwidth]{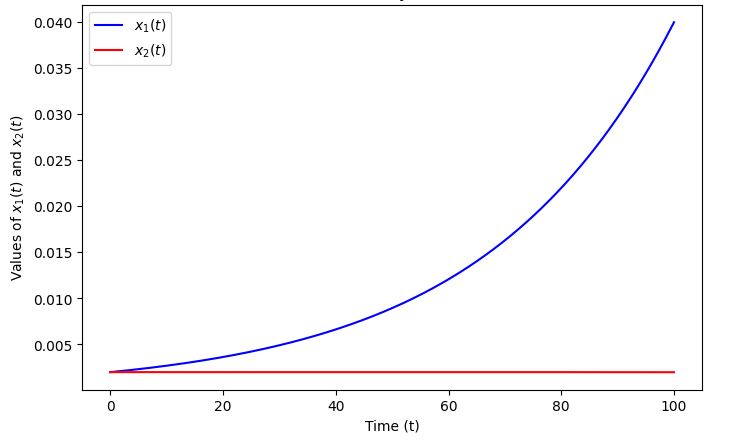} 
	\end{center}\caption{\label{f2} Steady dynamics for system (3.14) for constant coefficient with initial condition $x_1(0) = 0.002$ and
		$x_2(0) = 0.002$. }	
\end{figure}

\section{Discussion}
In this paper, we are concerned with competitive allelopathic planktonic systems with fear
effect, where the non-toxic species is “fearful” of the toxic species. The existence of positive periodic solutions is both significant and intriguing. We utilized Mawhin’s coincidence
degree theory to establish sufficient conditions for their existence. In nature, biological and
environmental parameters inherently vary over time. To capture this dynamic behavior in
our model, we introduced periodically varying coefficient functions, leading to periodic solutions for system \eqref{e3}-\eqref{e4}. We demonstrated our findings through an illustrative example
supported by numerical simulations. To further validate our results, we also considered a
case where the coefficients were taken as constants instead of periodic functions. In this
scenario, the system exhibited steady-state dynamics, highlighting the crucial role of timedependent variations in generating periodic behavior. The existence of periodic solutions
can offer insights into the seasonal dynamics of toxic planktonic species (such as harmful
algal blooms), such as their emergence, persistence, and decline, which is crucial for managing aquatic ecosystems. This understanding could aid in developing more effective control
strategies and improve the overall management of marine and freshwater ecosystems.

\ \ \ \  A recent study \cite{apm1} used coincidence degree theory to analyze dynamic equations on time
scales with multiple and time-varying delays, focusing on interacting phytoplankton species
that produce mutual toxins. Extending system \eqref{e3}-\eqref{e4} to a time scale with similar delays
and applying coincidence degree theory could be a promising direction for future research. Recent studies \cite{frac1,frac2} show that fractional calculus better captures complex systems and
natural phenomena than traditional integer-order models. Kunawat et al.\ \cite{kumawat2024mathematical} explored allelopathic stimulatory phytoplankton species using fractal–fractional derivatives, suggesting
an intriguing avenue for studying the fractional counterpart of the allelopathic phytoplankton
model with fear effects.

	\bibliographystyle{plain}
	\bibliography{Ref}

\begin{thebibliography}{10}

\bibitem{abbas2012almost}
Syed Abbas, Moitri Sen, and Malay Banerjee.
\newblock Almost periodic solution of a non-autonomous model of phytoplankton
  allelopathy.
\newblock {\em Nonlinear Dynamics}, 67:203--214, 2012.

\bibitem{apm1}
Divya Agrawal and Syed Abbas.
\newblock Existence of periodic solutions for a class of dynamic equations with
  multiple time varying delays on time scales.
\newblock {\em Qualitative Theory of Dynamical Systems}, 23(1):32, 2024.

\bibitem{frac1}
Sanjay Bhatter, Kamlesh Jangid, Shyamsunder Kumawat, Dumitru Baleanu,
  Sunil~Dutt Purohit, and Daya~Lal Suthar.
\newblock A new investigation on fractionalized modeling of human liver.
\newblock {\em Scientific Reports}, 14(1):1636, 2024.

\bibitem{biswas2021delay}
Saswati Biswas, Pankaj~Kumar Tiwari, and Samares Pal.
\newblock Delay-induced chaos and its possible control in a seasonally forced
  eco-epidemiological model with fear effect and predator switching.
\newblock {\em Nonlinear Dynamics}, 104(3):2901--2930, 2021.

\bibitem{bohner2006existence}
Martin Bohner, Meng Fan, and Jimin Zhang.
\newblock Existence of periodic solutions in predator--prey and competition
  dynamic systems.
\newblock {\em Nonlinear Analysis: Real World Applications}, 7(5):1193--1204,
  2006.

\bibitem{briggs2020major}
Nathan Briggs, Giorgio Dall’Olmo, and Herv{\'e} Claustre.
\newblock Major role of particle fragmentation in regulating biological
  sequestration of co2 by the oceans.
\newblock {\em Science}, 367(6479):791--793, 2020.

\bibitem{apm4}
Min Cai, Shuling Yan, and Zengji Du.
\newblock Positive periodic solutions of an eco-epidemic model with
  crowley--martin type functional response and disease in the prey.
\newblock {\em Qualitative theory of dynamical systems}, 19:1--20, 2020.

\bibitem{chen2013extinction}
Fengde Chen, Xiaojie Gong, and Wanlin Chen.
\newblock Extinction in two dimensional discrete lotka-volterra competitive
  system with the effect of toxic substances (ii).
\newblock {\em Dyn. Contin. Discrete Impuls. Syst., Ser. B, Appl. Algorithms},
  20(4):449--461, 2013.

\bibitem{insp1}
Fengde Chen, Xiaojie Gong, and Wanlin Chen.
\newblock Extinction in two dimensional discrete lotka-volterra competitive
  system with the effect of toxic substances (ii).
\newblock {\em Dyn. Contin. Discrete Impuls. Syst., Ser. B, Appl. Algorithms},
  20(4):449--461, 2013.

\bibitem{insp2}
Shangming Chen, Fengde Chen, Vaibhava Srivastava, and Rana~D Parshad.
\newblock Dynamical analysis of an allelopathic phytoplankton model with fear
  effect.
\newblock {\em Qualitative Theory of Dynamical Systems}, 23(4):189, 2024.

\bibitem{apm2}
Shangming Chen, Fengde Chen, Vaibhava Srivastava, and Rana~D Parshad.
\newblock Dynamical analysis of an allelopathic phytoplankton model with fear
  effect.
\newblock {\em Qualitative Theory of Dynamical Systems}, 23(4):189, 2024.

\bibitem{apm5}
Xiao Chen and Zengji Du.
\newblock Existence of positive periodic solutions for a neutral delay
  predator--prey model with hassell--varley type functional response and
  impulse.
\newblock {\em Qualitative theory of dynamical systems}, 17:67--80, 2018.

\bibitem{apm6}
Oumar Diop and Abdou S{\`e}ne.
\newblock Mathematical model of the dynamics of fish, waterbirds and tourists
  in the djoudj national park, senegal.
\newblock {\em Acta biotheoretica}, 64:447--468, 2016.

\bibitem{insp4}
Giovana~O Fistarol, Catherine Legrand, Karin Rengefors, and Edna Gran{\'e}li.
\newblock Temporary cyst formation in phytoplankton: a response to allelopathic
  competitors?
\newblock {\em Environmental Microbiology}, 6(8):791--798, 2004.

\bibitem{kaur2021impact}
Rajinder~Pal Kaur, Amit Sharma, and Anuj~Kumar Sharma.
\newblock Impact of fear effect on plankton-fish system dynamics incorporating
  zooplankton refuge.
\newblock {\em Chaos, Solitons \& Fractals}, 143:110563, 2021.

\bibitem{kumawat2024mathematical}
Sangeeta Kumawat, Sanjay Bhatter, Bhamini Bhatia, Sunil~Dutt Purohit, and
  DL~Suthar.
\newblock Mathematical modeling of allelopathic stimulatory phytoplankton
  species using fractal--fractional derivatives.
\newblock {\em Scientific Reports}, 14(1):20019, 2024.

\bibitem{lai2020stability}
Liyun Lai, Zhenliang Zhu, and Fengde Chen.
\newblock Stability and bifurcation in a predator--prey model with the additive
  allee effect and the fear effect.
\newblock {\em Mathematics}, 8(8):1280, 2020.

\bibitem{laundre2001wolves}
John~W Laundr{\'e}, Lucina Hern{\'a}ndez, and Kelly~B Altendorf.
\newblock Wolves, elk, and bison: reestablishing the" landscape of fear" in
  yellowstone national park, usa.
\newblock {\em Canadian Journal of Zoology}, 79(8):1401--1409, 2001.

\bibitem{ccsp22}
Catherine Legrand, Karin Rengefors, Giovana~O Fistarol, and Edna Graneli.
\newblock Allelopathy in phytoplankton-biochemical, ecological and evolutionary
  aspects.
\newblock {\em Phycologia}, 42(4):406--419, 2003.

\bibitem{li2018existence}
Xiaowan Li, Xiaojie Lin, and Jiang Liu.
\newblock Existence and global attractivity of positive periodic solutions for
  a predator-prey model with crowley-martin functional response.
\newblock {\em Electronic Journal of Differential Equations}, 2018(191):1--17,
  2018.

\bibitem{liu2022stability}
Tingting Liu, Lijuan Chen, Fengde Chen, and Zhong Li.
\newblock Stability analysis of a leslie--gower model with strong allee effect
  on prey and fear effect on predator.
\newblock {\em International Journal of Bifurcation and Chaos}, 32(06):2250082,
  2022.

\bibitem{apm7}
Weijie Lu and Yonghui Xia.
\newblock Periodic solution of a stage-structured predator-prey model with
  crowley-martin type functional response.
\newblock {\em AIMS Math}, 7:8162--75, 2022.

\bibitem{mandal2023toxicity}
Arindam Mandal, Saswati Biswas, and Samares Pal.
\newblock Toxicity-mediated regime shifts in a contaminated nutrient--plankton
  system.
\newblock {\em Chaos: An Interdisciplinary Journal of Nonlinear Science},
  33(2), 2023.

\bibitem{mawhin}
Jean Mawhin.
\newblock Topological degree and boundary value problems for nonlinear
  differential equations.
\newblock {\em Topological methods for ordinary differential equations}, pages
  74--142, 1993.

\bibitem{maynard1974models}
John Maynard-Smith.
\newblock {\em Models in ecology}.
\newblock Cambridge university press, 1974.

\bibitem{pradhan2022phytoplankton}
Biswajita Pradhan and Jang-Seu Ki.
\newblock Phytoplankton toxins and their potential therapeutic applications: a
  journey toward the quest for potent pharmaceuticals.
\newblock {\em Marine Drugs}, 20(4):271, 2022.

\bibitem{insp5}
Robert~M Pringle, Tyler~R Kartzinel, Todd~M Palmer, Timothy~J Thurman, Kena
  Fox-Dobbs, Charles~CY Xu, Matthew~C Hutchinson, Tyler~C Coverdale, Joshua~H
  Daskin, Dominic~A Evangelista, et~al.
\newblock Predator-induced collapse of niche structure and species coexistence.
\newblock {\em Nature}, 570(7759):58--64, 2019.

\bibitem{apm3}
Satyam~Narayan Srivastava, Seshadev Padhi, and Alexander Domoshnitsky.
\newblock Periodic solution of a bioeconomic fishery model by coincidence
  degree theory.
\newblock {\em Electronic Journal of Qualitative Theory of Differential
  Equations}, 2023(29):1--12, 2023.

\bibitem{fearcm}
Vaibhava Srivastava, Eric~M Takyi, and Rana~D Parshad.
\newblock The effect of" fear" on two species competition.
\newblock {\em Mathematical Biosciences and Engineering}, 20(5):8814--8855,
  2023.

\bibitem{apm8}
Dongshu Wang.
\newblock Positive periodic solutions for a nonautonomous neutral delay
  prey-predator model with impulse and hassell-varley type functional response.
\newblock {\em Proceedings of the American Mathematical Society},
  142(2):623--638, 2014.

\bibitem{wang2016modelling}
Xiaoying Wang, Liana Zanette, and Xingfu Zou.
\newblock Modelling the fear effect in predator--prey interactions.
\newblock {\em Journal of mathematical biology}, 73(5):1179--1204, 2016.

\bibitem{ccsp48}
J~David Wiens, Robert~G Anthony, and Eric~D Forsman.
\newblock Competitive interactions and resource partitioning between northern
  spotted owls and barred owls in western oregon.
\newblock {\em Wildlife Monographs}, 185(1):1--50, 2014.

\bibitem{winder2012phytoplankton}
Monika Winder and Ulrich Sommer.
\newblock Phytoplankton response to a changing climate.
\newblock {\em Hydrobiologia}, 698:5--16, 2012.

\bibitem{frac2}
Changjin Xu, Muhammad Farman, Zixin Liu, and Yicheng Pang.
\newblock Numerical approximation and analysis of epidemic model with constant
  proportional caputo operator.
\newblock {\em Fractals}, 32(02):2440014, 2024.

\bibitem{zhao2020extinction}
Liang Zhao, Fengde Chen, Saixi Song, and Guizhen Xuan.
\newblock The extinction of a non-autonomous allelopathic phytoplankton model
  with nonlinear inter-inhibition terms and feedback controls.
\newblock {\em Mathematics}, 8(2):173, 2020.

\end{thebibliography}
\end{document}